\title{A relative bicommutant theorem: the stable case of Pedersen's question}
\author{Thierry Giordano}
\address{Department of Mathematics and Statistics\\
University of Ottawa\\
585 King Edward Ave.\\
Ottawa, Ontario\\
K1N 6N5\\
Canada}
\email{giordano@uOttawa.ca}
\author{Ping W. Ng}
\address{Department of Mathematics\\
University of Louisiana at Lafayette\\
217 Maxim Doucet Hall, P. O. Box 43568\\
Lafayette, Louisiana\\
70504--3568\\
USA}
\email{png@louisiana.edu}
\thanks{T. G. was partially supported by a grant
from NSERC Canada.}
\newenvironment{Th}[1]
{\InnerTh}
{\endInnerTh}
\newtheorem{thm}{Theorem}[section]
\newtheorem{df}[thm]{Definition}
\newtheorem{prop}[thm]{Proposition}
\newtheorem{cor}[thm]{Corollary}
\newtheorem{lem}[thm]{Lemma}
\newcommand{\A}{\mathcal{A}}
\newcommand{\B}{\mathcal{B}}
\newcommand{\C}{\mathcal{C}}
\newcommand{\D}{\mathcal{D}}
\newcommand{\E}{\mathcal{E}}
\newcommand{\K}{\mathcal{K}}
\newcommand{\hh}{\mathcal{H}}
\newcommand{\bb}{\mathbb{B}}
\newcommand{\Mul}{\mathcal{M}}
\newcommand{\Proj}{\textbf{Proj}}
\newcommand{\Lat}{\textbf{Lat}}
\newcommand{\Alg}{\mathcal{A}lg}
\begin{document}

\maketitle

\begin{abstract}
In 1976, D. Voiculescu proved that every separable unital sub-C*-algebra
of the Calkin algebra is equal to its (relative) bicommutant.  In his
minicourse (see \cite{PedersenCC}), G. Pedersen asked in 1988 if Voiculescu's
theorem can be extended to a simple corona algebra of a $\sigma$-unital
C*-algebra.   In this note, we answer Pedersen's question for a stable
$\sigma$-unital C*-algebra.
\end{abstract}

\section{Introduction}

Let $\hh$ be a separable infinite dimensional Hilbert space.
Recall that the Calkin algebra, the quotient of $\bb(\hh)$ by
the closed two-sided ideal $\K = \K(\hh)$ of compact operators, is a (non separable) simple C*-algebra.
In his ground-breaking publication on the noncommutative
Weyl--von Neumann theorem, D. Voiculescu proved the following:

\begin{Th}{A} 
(\cite{Voiculescu}, Corollary 19)
A separable unital sub-C*-algebra of the Calkin algebra is equal to its
(relative) bicommutant.
\label{Th:A}
\end{Th}

For a C*-algebra $\B$, let $\Mul(\B)$ denote its multiplier algebra and
$\Mul(\B)/\B$ its corona C*-algebra.
As the multiplier algebra of $\K$ is isomorphic to $\bb(\hh)$, the corona
C*-algebra of $\K$ is the Calkin algebra.

In his mini-course on the corona construction (\cite{PedersenCC}), G.K. Pedersen asks in Remark 10.11 if Voiculescu's
theorem can be extended to the case of a $\sigma$-unital algebra $\B$, whose
corona algebra is simple.

As the corona algebra of a $\sigma$-unital simple purely infinite C*-algebra
is simple (\cite{LinContScaleI}, \cite{LinSimpleCorona}), the following main result
of this note gives a positive answer to the stable case of Pedersen's question:

\begin{Th}{B}
Let $\B$ be a $\sigma$-unital stable simple and purely infinite C*-algebra.

Then any separable unital sub-C*-algebra of $\Mul(\B)/\B$ is equal to its
(relative) bicommutant.
\label{Th:B}
\end{Th}

This note is structured as follows. We quickly review in Section 2,  Elliott--Kucerovsky
theory of absorbing extensions and use it to show in Proposition \ref{prop:SimpleCorona} that a separable stable simple and purely infinite C*-algebra has a nice theory of extensions. This becomes one of the key tools of the proof of Theorem \ref{Th:B}.

Let $\A$ be a unital norm-closed subalgebra of a unital C*-algebra $\D$. Following
\cite{Voiculescu}, let $\Lat(\A)$ denote the set of all self-adjoint projections
$e$ of $\D$ such that $(1 - e)a e = 0$ for every $a \in \A$, and let
$\Alg(\Lat(\A))$  be the subalgebra of all $y \in \D$ such that $
 (1 -e) y e = 0, \forall e \in \Lat(\A).$
Notice that if $\D = \bb(\hh)$, then $\Lat(\A)$ is the lattice of closed invariant subspaces (projections) for $\A$, and
$\Alg(\Lat(\A)) = \{ T \in \bb(\hh)\;;\; \Lat(\A) \subset \Lat(T)\,\}$. Then $\A$ is a reflexive subalgebra of $\bb(\hh)$ if
$\A = \Alg(\Lat(\A))$.

In Theorem 1.8 of \cite{Voiculescu}, D. Voiculescu proves that if $\A$ is a separable unital norm-closed subalgebra of
the Calkin algebra, then $\A$ is equal to $\Alg(\Lat(\A))$.
The main result of Section 3 is Theorem \ref{thm:main1} where we generalizes Voiculescu's result and its proof by replacing $\K$ by any separable stable simple purely infinite C*-algebra $\B$. Corollary \ref{cor:main2} is then the direct generalization of Theorem \ref{Th:A}.

In Section 4, we generalize Theorem \ref{thm:main1} by relaxing the separability condition on the C*-algebra $\B$ and prove Theorem \ref{Th:B}
(as Corollary \ref{cor:main3}).

\section{Preliminaries in the theory of absorbing extensions}

The theory of extensions and of absorbing extensions has a long history
(see in particular \cite{BDFOriginal}, \cite{ElliottKucerovskyAbsorb},
\cite{KasparovAbsorbing},
\cite{LinAbsorbing2002}, \cite{Voiculescu}), culminating in the Elliott--Kucerovsky
theory of absorbing extensions.
In this section, we briefly sketch some preliminaries and recall
Elliott--Kucerovsky's Theorem 6.

  Using Lemma \ref{lem:AbsorpAndSimpleCorona}, we
summarize in
Proposition \ref{prop:SimpleCorona} several characterizations of a separable
stable simple purely infinite C*-algebra using properties of its theory
of extensions.

For  background notions in the theory of extensions, we use
monographs \cite{BlackadarBook} and \cite{ThomsenJensen}.

\begin{df} (See \cite{ElliottKucerovskyAbsorb} Definitions 1 and 2.)
\begin{enumerate}
\item \label{First} Let $\B$ be a C*-algebra and
$\E$  a C*-algebra containing $\B$ as a closed
two sided ideal.
We say $\E$ is \emph{purely large with respect to} $\B$ if for all
$d \in \E - \B$, the C*-algebra $\overline{d \B d^*}$ contains a
stable sub-C*-algebra which is full in $\B$.
\item Let $\A, \B$ be C*-algebras
and let
$$0 \rightarrow \B \rightarrow \E \rightarrow \A \rightarrow 0$$
be an extension of $\B$ by $\A$.  We say that the extension is
\emph{purely large} if the C*-algebra of the extension, $\E$, is
purely large with respect to the image of $\B$ in it, in the sense
described in (\ref{First}).
\end{enumerate}
\end{df}

Note that a purely large extension
$0 \rightarrow \B \overset{\gamma}{\rightarrow} \E
\overset{\pi}{\rightarrow} \A \rightarrow 0$ is essential
(that is, $\gamma(\B)$ is an essential closed two-sided ideal
of $\E$).

Recall that to any extension $0 \rightarrow \B \rightarrow \E \rightarrow \A \rightarrow 0$
is naturally associated a *-homomorphism from $\A = \E/\B$ to the corona algebra
$\Mul(\B)/\B$
called the \emph{Busby invariant} of the extension.
It is well-known
(see, for example, \cite{BlackadarBook}, 15.4)
that up to strong isomorphism (in the sense of Blackadar), an extension is
determined by its Busby invariant.
Therefore, we will not distinguish between extensions  and their Busby invariants.

For the rest of this section, let $\A$ be a unital C*-algebra.
Recall (\cite{BlackadarBook}, 15.5) that a unital,
extension $\phi : \A \rightarrow \Mul(\B)/\B$ is a
\emph{(strongly) unital trivial extension}
if it lifts to a unital *-homomorphism $\phi_0$ from $\A$ to $\Mul(\B)$.
To simplify notation and as in \cite{ElliottKucerovskyAbsorb},
we will omit the term strongly.

As in \cite{ElliottKucerovskyAbsorb}, Definition 5,
recall that a *-homomorphism $\phi_0 : \A \rightarrow \Mul(\B)$
is \emph{weakly nuclear} if
for each contraction $b \in \B$, the contractive c.p. map
$\A \rightarrow \B$ given by  $a \mapsto b \phi_0(a) b^*$ is nuclear (i.e.,
it factors approximately (in norm) through finite dimensional C*-algebras with
contractive c.p. maps).
Then a trivial unital extension $\phi : \A \rightarrow \Mul(\B)/\B$ is
\emph{weakly nuclear} if it lifts to a weakly nuclear unital *-homomorphism
$\phi_0 : \A \rightarrow \Mul(\B)$.

If $\B$ is stable, let $S$ and $T$ be two isometries of $\Mul(\B)$
such that  $S S^* + T T^* = 1_{\Mul(\B)}$.
Then for two extensions
$\phi, \psi : \A \rightarrow \Mul(\B)/\B$,  let $\phi \oplus \psi$
be their \emph{Brown--Douglas--Fillmore
(BDF) sum} given, for $a \in \A$, by
$$\phi \oplus \psi(a) = s \phi(a)s^* + t\psi(a) t^*$$
where $s = q(S)$ and $t = q(T)$
(and $q$ is the canonical surjection from $\Mul(\B)$ to
$\Mul(\B)/\B$).
The BDF sum is well-defined up to BDF-equivalence and is independent of the
choice of isometries $S$ and $T$ in $\Mul(\B)$.

Let $\A$ and $\B$ be separable C*-algebras with $\A$ unital and $\B$
stable.  A unital extension $\phi : \A \rightarrow \Mul(\B)/\B$ is
\emph{absorbing in the nuclear sense} if for any weakly nuclear (strongly)
unital trivial extension $\psi : \A \rightarrow \Mul(\B)/\B$, there exists a
unitary $U \in \Mul(\B)$ such that, for all $a \in \A$,
$$\phi(a) = u( \phi \oplus \psi(a))u^*$$
where $u = q(U)$.

Then the main result of Elliott--Kucerovsky's paper is the following:

\begin{thm}
(\cite{ElliottKucerovskyAbsorb}, Theorem 6)
Let $\A$ and $\B$ be separable C*-algebras with $\A$ unital and $\B$
stable.

For a unital essential extension $\phi : \A \rightarrow \Mul(\B)/\B$
of $\B$ by $\A$,
the following statements are equivalent:
\begin{enumerate}
\item $\phi$ is purely large.
\item
$\phi$ is absorbing in the nuclear sense.
\end{enumerate}
\label{thm:ElliottKucerovskyAbsorb}
\end{thm}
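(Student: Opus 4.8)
The plan is to prove the two implications separately, treating $(2)\Rightarrow(1)$ as the soft direction and $(1)\Rightarrow(2)$ as the analytic core. For $(2)\Rightarrow(1)$, I would argue that an absorbing extension visibly contains a large trivial part. Fix a weakly nuclear unital trivial extension $\psi$ with lift $\psi_0:\A\to\Mul(\B)$ whose image, together with $\B$, generates a stable full hereditary subalgebra (such $\psi_0$ exist because $\B$ is stable). Absorption gives $\phi\cong\phi\oplus\psi$, so after conjugating by the absorbing unitary any $d$ in the extension algebra $\E=q^{-1}(\phi(\A))$ with $d\notin\B$ decomposes as a sum whose $\psi$-summand supplies, inside $\overline{d\B d^*}$, a stable sub-C*-algebra that is full in $\B$. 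Verifying the purely large condition then reduces to checking fullness and stability of this summand, which are built into the construction of $\psi$.

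The substance is $(1)\Rightarrow(2)$. Given the purely large $\phi$ and an arbitrary weakly nuclear unital trivial extension $\psi$ with weakly nuclear lift $\psi_0$, I would first reduce absorption to the single unitary equivalence $\phi\cong\phi\oplus\psi$ in $\Mul(\B)/\B$: by the usual rotation and Eilenberg-swindle argument (adjoining countably many copies of $\psi$ and rearranging them via the stability-given isometries $S,T$), it suffices to dominate one copy of $\psi$ by $\phi$.

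I would then establish an \emph{approximate} version of this domination: for each finite $F\subset\A$ and $\varepsilon>0$, produce an isometry $V\in\Mul(\B)$ with $\|q(V)^*\phi(a)q(V)-\psi(a)\|<\varepsilon$ for all $a\in F$, while keeping the complementary projection $1-VV^*$ full and stable. This is exactly where both hypotheses enter: weak nuclearity of $\psi_0$ lets me invoke a Stinespring/Kasparov dilation so that $\psi_0$ is approximately factored through compressions of a dilation of $\phi$, while the purely large condition furnishes, inside each $\overline{d\B d^*}$, the stable full subalgebra providing the orthogonal room (infinitely many equivalent orthogonal subisometries) needed to house the image of $\psi_0$ off to the side of $\phi$ and to keep the complement large.

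The main obstacle, and the reason Kasparov's technical theorem is needed, is to carry out this construction \emph{simultaneously and repeatedly}: at each stage one must embed $\psi_0$ approximately while preserving enough fullness and stability in the complement to iterate, and one must arrange the successive partial isometries to converge strictly to a genuine unitary $U\in\Mul(\B)$ with all stage errors lying in $\B$ and summably controlled. I would organize this as an approximate intertwining in the sense of Elliott, building sequences of isometries $V_n$ and unitaries so that the approximate containments become compatible in the limit, using a quasicentral approximate unit for $\B$ to push the errors into $\B$ and the stability of $\B$ to absorb them. Passing to the limit yields a unitary $U\in\Mul(\B)$ with $\phi=q(U)(\phi\oplus\psi)q(U)^*$, which is precisely absorption in the nuclear sense.
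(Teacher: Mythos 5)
You should first note that the paper does not prove this statement at all: it is quoted, with attribution, as Theorem 6 of Elliott--Kucerovsky \cite{ElliottKucerovskyAbsorb}, and is used as a black box (via Proposition \ref{prop:SimpleCorona} and Lemma \ref{lem:Mar3020178AM}). So there is no in-paper argument to compare yours against; the honest options are to cite the result, as the authors do, or to actually reprove it.

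Measured against the published proof, your outline reproduces its architecture (swindle reduction to a single equivalence $\phi\cong\phi\oplus\psi$, approximate domination, Elliott-style intertwining with quasicentral approximate units), but the two places where the theorem actually lives are asserted rather than proved. In $(2)\Rightarrow(1)$ you say the $\psi$-summand ``supplies, inside $\overline{d\B d^*}$, a stable sub-C*-algebra full in $\B$,'' but after conjugating, a lift $d$ of $\phi(a)$ has the form $U(SD S^*+T\psi_0(a)T^*+b)U^*$ with $b\in\B$, and hereditary subalgebras are not preserved under perturbation by elements of $\B$; extracting a stable full subalgebra of $\overline{d\B d^*}$ from the $\psi_0(a)$-corner requires both a specific choice of $\psi$ (an infinite repeat, for which one must separately verify pure largeness --- this is essentially \cite{ElliottKucerovskyAbsorb} Theorem 17) and a perturbation argument; neither is ``built into the construction.'' In $(1)\Rightarrow(2)$, the entire content is the approximate-domination step: producing an isometry $V\in\Mul(\B)$ with $q(V)^*\phi(a)q(V)\approx\psi(a)$ while keeping the complement large. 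Saying that weak nuclearity ``lets me invoke a Stinespring/Kasparov dilation'' and that pure largeness ``furnishes the orthogonal room'' names the hypotheses' roles correctly but does not construct $V$; the actual mechanism (factoring the weakly nuclear map through matrix algebras and using full stable hereditary subalgebras of the $\overline{d\B d^*}$ to build the required orthogonal families of partial isometries with errors in $\B$) is the technical heart of Elliott and Kucerovsky's paper and is absent here. As written, this is a correct roadmap of a known proof, not a proof; for the purposes of this paper the correct move is the one the authors make, namely to cite \cite{ElliottKucerovskyAbsorb}.
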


For the sake of completeness, we include a proof of the following lemma, even if
it is certainly well-known to the experts.

\begin{lem}
Let $\B$ be a separable stable simple C*-algebra with the following property:

For every unital separable C*-algebra $\A$, every unital essential extension of
$\B$ by $\A$ is absorbing in the nuclear sense.

Then the corona algebra $\Mul(\B)/B$ of $\B$ is simple.
\label{lem:AbsorpAndSimpleCorona}
\end{lem}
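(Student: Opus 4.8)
The plan is to prove the stronger statement that every nonzero element of $\Mul(\B)/\B$ generates $\Mul(\B)/\B$ as a closed two-sided ideal, which is exactly simplicity. Since the closed two-sided ideal generated by any element $y$ coincides with the one generated by $y^*y$, and ideals are scaling-invariant, it suffices to treat a positive contraction $x$ with $\|x\| = 1$. Note that then $1 = \max \sigma(x) \in \sigma(x)$, the spectrum being computed in the unital C*-algebra $\Mul(\B)/\B$. The goal becomes to show that $1$ lies in the closed ideal generated by $x$.

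The first step is to build an essential extension out of $x$. Let $\iota \in C(\sigma(x))$ be the inclusion function $\iota(t) = t$, and let $\phi : C(\sigma(x)) \to \Mul(\B)/\B$ be the unital $*$-homomorphism given by continuous functional calculus, $\phi(h) = h(x)$, so that $\phi(\iota) = x$. As functional calculus over the exact spectrum is faithful, $\phi$ is injective, hence the associated extension of $\B$ by the separable unital C*-algebra $C(\sigma(x))$ is essential; by hypothesis it is absorbing in the nuclear sense. Alongside $\phi$ I use the trivial extension obtained by evaluating at the top point of the spectrum: let $\psi : C(\sigma(x)) \to \Mul(\B)/\B$ be $\psi(h) = h(1)\,1$, which lifts to the unital $*$-homomorphism $h \mapsto h(1)\,1_{\Mul(\B)}$ and is therefore (strongly) unital and trivial; crucially $\psi(\iota) = \iota(1)\,1 = 1$. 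Because $\B$ is stable, I fix isometries $S, T \in \Mul(\B)$ with $SS^* + TT^* = 1$ and set $s = q(S)$, $t = q(T)$, so that $s, t$ are isometries of the corona with $t^*s = 0$ and $t^*t = 1$.

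Applying absorption to $\phi$ and the weakly nuclear trivial extension $\psi$ produces a unitary $u = q(U)$ with
\[
x = \phi(\iota) = u\,(\phi\oplus\psi)(\iota)\,u^* = u\,(s\,x\,s^* + t\,t^*)\,u^* .
\]
Compressing the ideal element $u^*xu$ by the single isometry $t$ then isolates the trivial summand, giving $t^*(u^*xu)t = t^*s\,x\,s^*t + (t^*t)(t^*t) = 1$. Since $u^*xu$ and its compression by $t$ remain in the closed ideal generated by $x$, this shows $1$ belongs to that ideal, so it is all of $\Mul(\B)/\B$, and simplicity follows.

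The points requiring care are precisely those that license invoking the hypothesis and the BDF machinery, and this is where I expect the real (if modest) work to lie. First, the extension $\phi$ must be genuinely essential: this is why I pass to the domain $C(\sigma(x))$ rather than $C[0,1]$, so that $\phi$ is injective and essentiality of $\B$ in the extension algebra is equivalent to injectivity of the Busby invariant. Second, the hypothesis concerns weakly nuclear trivial extensions, so I must check that $\psi$ is weakly nuclear; this is automatic since $C(\sigma(x))$ is commutative, hence nuclear, forcing every map $a \mapsto b\,\psi_0(a)\,b^*$ to be nuclear. Third, the existence of the isometries and the well-definedness of $\phi \oplus \psi$ use stability of $\B$, which is assumed. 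The one genuinely clever ingredient is the choice of $\psi$: evaluating at the top point $1 \in \sigma(x)$ is exactly what makes the $\psi$-corner of $\phi\oplus\psi$ equal to the unit, so that a single compression extracts $1$ from the ideal generated by $x$.
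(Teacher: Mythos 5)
Your proof is correct, and it takes a different (and in fact more self-contained) route than the paper's. The paper argues by contraposition: if the corona is not simple, pick a non-full self-adjoint element $a$, observe that the inclusion of $C^*(a,1)$ is a unital essential extension, and then simply cite the fact that ``nuclearly absorbing extensions always bring nonzero elements to full elements'' to conclude this inclusion cannot be absorbing. You instead prove simplicity directly, and in doing so you actually supply the argument behind the fact the paper leaves unproved: for a norm-one positive $x$ you take the essential extension $\phi:C(\sigma(x))\to\Mul(\B)/\B$ given by functional calculus, absorb the point-evaluation trivial extension $\psi(h)=h(1)1$ (weakly nuclear because $C(\sigma(x))$ is nuclear), and compress $u^*xu=sxs^*+tt^*$ by the isometry $t$ to land the unit in the ideal generated by $x$. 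All the checks you flag are the right ones and they go through: $\max\sigma(x)=1$ so evaluation at $1$ makes sense, injectivity of the Busby map is equivalent to essentiality, $t^*s=0$ follows from $SS^*+TT^*=1$, and the hypothesis applies since $C(\sigma(x))$ is unital and separable. What your approach buys is a complete argument from the stated definitions, at the cost of a slightly longer write-up; what the paper's buys is brevity, at the cost of invoking an unreferenced standard fact --- the very fact your compression computation establishes.
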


\begin{proof}

We prove the contrapositive.
Suppose that $\Mul(\B)/\B$ is not simple.
Let $a \in \Mul(\B)/\B$ be a nonzero self-adjoint element which is not
full in $\Mul(\B)/\B$, i.e., the closed two sided ideal generated by $a$
is a proper ideal of $\Mul(\B)/\B$.
Let $\phi : C^*(a, 1_{\Mul(\B)/\B}) \rightarrow \Mul(\B)/\B$ be the
inclusion map.  Then $\phi$ is a unital essential extension.  However,
$\phi$ is not absorbing in the nuclear sense,
since nuclearly absorbing extensions always bring nonzero
elements to full elements (and since $\phi(a)$ is not full).

\end{proof}

Recall that a nonunital $\sigma$-unital simple C*-algebra $\B$ has \emph{continuous scale}
if $\B$ has an approximate identity $\{ e_n \}_{n=1}^{\infty}$ such that
i. $e_{n+1} e_n = e_n$ for all $n$ and ii. for every $b \in \B_+ - \{ 0 \}$,
there exists $N \geq 1$ such that for all $m > n \geq N$,
$e_m - e_n \preceq b$.  (See \cite{LinContScaleI}.)

We summarize and recall in the next proposition several characterizations of the simplicity
of the corona algebra.  This also illustrates the correspondence between
``nice" extension theory and ``nice" corona algebra structure.

\begin{prop}
Let $\B$ be a separable stable simple C*-algebra.

Then  the following statements are equivalent:

\begin{enumerate}
\item Either $\B \cong \K$ or $\B$ is simple purely infinite.
\item Either $\B \cong \K$ or $\B$ has continuous scale.
\item $\Mul(\B)/\B$ is simple.
\item $\Mul(\B)/\B$ is simple purely infinite.
\item For every unital separable C*-algebra $\A$, every unital
essential extension of $\B$ by $\A$ is purely large.
\item For every unital separable C*-algebra $\A$, every unital
essential extension of $\B$ by $\A$ is absorbing in the nuclear
sense.
\end{enumerate}
\label{prop:SimpleCorona}
\end{prop}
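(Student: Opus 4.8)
The plan is to establish the cycle $(1)\Rightarrow(5)\Leftrightarrow(6)\Rightarrow(3)\Rightarrow(1)$, which closes the loop among $(1)$, $(3)$, $(5)$, $(6)$, and then to graft on $(2)$ and $(4)$ through the links $(2)\Leftrightarrow(3)$ and $(1)\Rightarrow(4)\Rightarrow(3)$. Several of these links are already in hand. The equivalence $(5)\Leftrightarrow(6)$ is nothing but the Elliott--Kucerovsky dichotomy of Theorem~\ref{thm:ElliottKucerovskyAbsorb}, applied uniformly over all separable unital $\A$: the standing hypotheses on $\A$ and $\B$ are exactly those required there. The implication $(6)\Rightarrow(3)$ is precisely Lemma~\ref{lem:AbsorpAndSimpleCorona}. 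Finally, $(2)\Leftrightarrow(3)$ I would quote from Lin's theory of continuous scale: for a non-elementary $\sigma$-unital simple C*-algebra, simplicity of the corona is equivalent to continuous scale (\cite{LinContScaleI}), the elementary case $\B\cong\K$ being the classical fact that the Calkin algebra is simple.

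For the forward implications out of $(1)$, I would argue directly. To prove $(1)\Rightarrow(5)$, fix a unital essential extension $0\to\B\to\E\to\A\to0$ and verify pure largeness. Given $d\in\E-\B$, essentiality of the ideal forces $d\B d^*\neq0$, and $\overline{d\B d^*}$ is isomorphic to a nonzero hereditary subalgebra of $\B$. When $\B$ is stable, simple and purely infinite this hereditary subalgebra is again simple; it is full by simplicity of $\B$; and, being a full hereditary subalgebra of a $\sigma$-unital stable simple algebra, it is itself stable (Hjelmborg--R\o rdam). Thus $\overline{d\B d^*}$ contains a stable subalgebra full in $\B$, which is the purely large condition; the case $\B\cong\K$ is the classical Brown--Douglas--Fillmore situation. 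Composing with $(5)\Leftrightarrow(6)$ and $(6)\Rightarrow(3)$ this already gives $(1)\Rightarrow(3)$, in agreement with the cited fact that the corona of a $\sigma$-unital simple purely infinite algebra is simple (\cite{LinContScaleI,LinSimpleCorona}). The implication $(1)\Rightarrow(4)$ I would likewise read off from \cite{LinSimpleCorona}, where the corona of such a $\B$ is shown to be not merely simple but simple purely infinite; and $(4)\Rightarrow(3)$ is trivial.

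The remaining and, I expect, hardest link is $(3)\Rightarrow(1)$: to show that a non-elementary stable simple algebra whose corona is simple is purely infinite. Through $(2)\Leftrightarrow(3)$ this reduces to proving that a non-elementary stable simple algebra with continuous scale is purely infinite. My approach would exploit the tension between stability and continuous scale. Stability lets me choose, along the approximate unit $\{e_n\}$, a subsequence whose telescoping increments $e_{n_{k+1}}-e_{n_k}$ are mutually orthogonal with full supremum, while continuous scale forces each such increment to satisfy $e_{n_{k+1}}-e_{n_k}\preceq b$ for an arbitrary prescribed nonzero $b\in\B_+$. Feeding these two facts into the Cuntz comparison, I would show that every nonzero positive element dominates a full element and hence is properly infinite, so that the Cuntz semigroup collapses to $\{0,\infty\}$ and $\B$ is purely infinite. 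This structural step is where the real work lies, and it may alternatively be extracted from the scale analysis of \cite{LinContScaleI}. With $(3)\Rightarrow(1)$ in place, the loop $(1)\Rightarrow(5)\Leftrightarrow(6)\Rightarrow(3)\Rightarrow(1)$ closes, and the grafted equivalences $(2)\Leftrightarrow(3)$ and $(1)\Rightarrow(4)\Rightarrow(3)$ complete the proof that $(1)$ through $(6)$ are equivalent.
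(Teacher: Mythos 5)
Your overall architecture coincides with the paper's: the loop $(1)\Rightarrow(5)\Leftrightarrow(6)\Rightarrow(3)$ with $(5)\Leftrightarrow(6)$ from Theorem \ref{thm:ElliottKucerovskyAbsorb} and $(6)\Rightarrow(3)$ from Lemma \ref{lem:AbsorpAndSimpleCorona}, with $(2)$ and $(4)$ grafted on via Lin's continuous-scale results. The paper closes the loop by simply citing the literature for $(1)\Leftrightarrow(3)$ (\cite{ZhangRiesz}, \cite{RorIdeal}) and for $(1)\Rightarrow(5)$ (\cite{ElliottKucerovskyAbsorb}, Theorem 17). The two places where you replace a citation by a direct argument are where the problems lie.

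The concrete error is in your proof of $(1)\Rightarrow(5)$: the assertion that a full hereditary subalgebra of a $\sigma$-unital stable simple C*-algebra is itself stable is false. Any corner $p\B p$ by a nonzero projection $p\in\B$ is a full (by simplicity) hereditary subalgebra of the stable algebra $\B$, yet it is unital and hence not stable; no theorem of Hjelmborg--R{\o}rdam says otherwise. The statement that is true, and that Elliott--Kucerovsky actually use, is Zhang's dichotomy: a nonzero hereditary subalgebra of a $\sigma$-unital simple purely infinite C*-algebra is either unital or stable. In the stable case $\overline{d\B d^*}$ is itself the required full stable subalgebra; in the unital case it is a unital simple purely infinite C*-algebra, so it contains a sequence of mutually orthogonal, mutually equivalent nonzero projections whose associated matrix units generate a copy of $\K$, which is stable and full in $\B$. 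Without this case split your verification of pure largeness does not go through. Separately, your $(3)\Rightarrow(1)$ is only a programme: you yourself flag that the step ``stability plus continuous scale forces pure infiniteness'' is ``where the real work lies,'' and the orthogonality you want for the increments $e_{n_{k+1}}-e_{n_k}$ only holds for increments that are spaced two apart, so the comparison argument needs care. That implication is precisely what \cite{ZhangRiesz}, \cite{RorIdeal} and \cite{LinContScaleI} establish, and the paper (reasonably) quotes it rather than reproving it.
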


\begin{proof}
The equivalence of (2), (3) and (4) can be found in
\cite{LinContScaleI} and \cite{LinSimpleCorona} (see also
\cite{Zhang1989}).

The equivalence of (1) and (3) can be found in
\cite{ZhangRiesz} and \cite{RorIdeal} (see also \cite{LinContScaleI}
and \cite{LinSimpleCorona}).

The equivalence of (5) and (6) is \cite{ElliottKucerovskyAbsorb} Theorem 6
(see our Theorem \ref{thm:ElliottKucerovskyAbsorb}).

That (1) implies (5) can be found in
\cite{ElliottKucerovskyAbsorb} Theorem 17 items (i) and (ii),
 as well as
\cite{Voiculescu}.

That (6) implies (3) is Lemma \ref{lem:AbsorpAndSimpleCorona}.

\end{proof}

We note that in Proposition \ref{prop:SimpleCorona} above,
statements (2), (3) and (4) are equivalent even without the
assumption of stability (e.g., see \cite{LinSimpleCorona}
Corollary 3.3).  We will require stability to relate simplicity of
the corona algebra to simple pure infiniteness of the canonical
ideal as well as nice extension theory.

\section{Generalization of Voiculescu's bicommutant theorem}

In this section, we first show three lemmas, which will be used in the
proof of Theorem \ref{thm:main1} and Corollary \ref{cor:main2}.
These two results
generalize Voiculescu's Theorem 1.8 and Corollary 1.9 of
\cite{Voiculescu}.   

Let us first recall  some notation introduced in
\cite{Voiculescu}.
If $\D$ is a unital C*-algebra and if $\A$ is a
subalgebra of $\D$, then $\Lat(\A)$ denotes the set of all projections
$e$ of $\D$ such that $(1 - e)a e = 0$ for every $a \in \A$, and let
$\Alg(\Lat(\A))$  be the subalgebra of $\D$ equal to
$$\Alg(\Lat(\A)) = \{ y \in \D : (1 -e) y e = 0, \forall e \in \Lat(\A) \}.$$
Note that, as remarked in \cite{ArvesonDuke} page 344,
$\Lat(\A)$ need not form a lattice, but we will follow the notation
of previous authors.

Notice that if $\Proj(\D)$ is the set of projections of $\D$ and
if $\A$ is a sub-C*-algebra of $\D$, then
$\Lat(\A) = \{ e \in \Proj(\D) : e \in \A' \}$ and
$(\A' \cap \D)' \cap \D \subseteq
\Alg(\Lat(\A))$.

In the proof of the last theorem (Theorem \ref{thm:Reflexive}),
in order to emphasize the ambient algebra $\D$,
we will write instead $\Lat_{\D}(\A)$ and
$\Alg_{\D}(\Lat_{\D}(\A))$.  For the rest of the paper, since
the ambient algebra is clear, we will drop the subscript ``$\D$".

\begin{lem}
Let $\D$ be a unital C*-algebra and $\A$ be a subalgebra of $\D$.
Then for any $d \in \D$,
$$\sup_{e \in \Lat(\A)} \| (1-e)d e \| \leq dist(d, \A).$$
\label{lem:Mar3020176AM}
\end{lem}

\begin{proof}

As for any  $a \in \A$ and $e \in \Lat(\A)$,
$$\| (1 - e) d e \| = \| (1 - e) (d - a) e \| \leq \| d - a \|,$$
the lemma is verified.
\end{proof}

The proof of the next lemma is contained in the first part of
the proof of
\cite{ArvesonDuke}, Corollary 2, pages 344 and 345.

\begin{lem}
Let $\C$ be a unital separable C*-algebra and
$\A$ be a  unital subalgebra of $\C$.

Then for any  $c \in \C$,
there exist a unital *-representation
$\psi : \C \rightarrow \mathbb{B}(\mathcal{H_{\psi}})$
on a separable infinite dimensional Hilbert space $\mathcal{H_{\psi}}$
and a projection $p \in \mathbb{B}(\mathcal{H_{\psi}})$
such that
\begin{enumerate}
\item $p \in \Lat(\psi(\A))$, i.e., $(1 -p)\psi(a) p = 0$
for all $a \in \A$,  and
\item $\| (1- p) \psi(c) p \| =
\| (1 - p) \psi(c) p \|_{ess} \geq dist(c, \A)$, where
the $\|. \|_{ess}$ denotes the essential norm.
\end{enumerate}
\label{lem:Mar3020177AM}
\end{lem}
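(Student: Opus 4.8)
The plan is to realize the distance $d := dist(c,\A)$ as an off-diagonal matrix coefficient in a suitable representation, following the argument on pages 344--345 of \cite{ArvesonDuke}. Since $dist(c,\A) = dist(c,\overline{\A})$ and $\overline{\A}$ is a closed subspace of $\C$, the Hahn--Banach theorem produces a functional $f \in \C^*$ with $\|f\| = 1$, $f|_{\overline{\A}} = 0$, and $f(c) = d$ (after adjusting by a scalar of modulus one). This functional encodes exactly what we need: it annihilates $\A$ but detects the full distance on $c$.

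Next I would represent $f$ as a vector functional. By the polar decomposition of functionals on a C*-algebra (apply the GNS construction to the state $|f|$ and extend normally to the bidual), there are a representation $\pi$ of $\C$ and unit vectors $\xi,\eta$ with $f(x) = \langle \pi(x)\xi,\eta\rangle$ for all $x \in \C$. Restricting $\pi$ to the invariant subspace generated by $\xi$ and $\eta$ yields a unital representation $\psi$ on a separable Hilbert space (separable because $\C$ is), still satisfying $f(x) = \langle \psi(x)\xi,\eta\rangle$. Let $p_0$ be the orthogonal projection onto $\overline{\psi(\A)\xi}$. Because $\A$ is a unital subalgebra, this subspace is $\psi(\A)$-invariant and contains $\xi = \psi(1)\xi$, so $(1-p_0)\psi(a)p_0 = 0$ for every $a \in \A$, i.e. $p_0 \in \Lat(\psi(\A))$; and since $f$ kills $\A$, the vector $\eta$ is orthogonal to $\overline{\psi(\A)\xi}$, whence $(1-p_0)\eta = \eta$ and $p_0\xi = \xi$. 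Therefore
$$\|(1-p_0)\psi(c)p_0\| \geq |\langle (1-p_0)\psi(c)p_0\,\xi,\eta\rangle| = |\langle \psi(c)\xi,\eta\rangle| = |f(c)| = d.$$

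Finally I would upgrade to an infinite-dimensional space and force the essential norm to coincide with the norm by passing to the infinite ampliation. Replace $\psi$ by $\psi^{(\infty)} = \psi \otimes 1$ acting on $\hh_\psi \otimes \ell^2$ and $p_0$ by $p := p_0 \otimes 1$. Then $\psi^{(\infty)}$ is unital on a separable infinite-dimensional space, and $(1-p)\psi^{(\infty)}(a)p = \big((1-p_0)\psi(a)p_0\big)\otimes 1 = 0$, so $p \in \Lat(\psi^{(\infty)}(\A))$, giving condition (1). Moreover $(1-p)\psi^{(\infty)}(c)p = \big((1-p_0)\psi(c)p_0\big)\otimes 1$ has the same norm as $(1-p_0)\psi(c)p_0$, hence norm at least $d$. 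The one genuinely technical point, which I regard as the crux, is the essential-norm equality: for any operator $Y$ the infinite ampliation satisfies $\|Y\otimes 1\|_{ess} = \|Y\otimes 1\| = \|Y\|$, because a sequence $\zeta\otimes e_n$ of unit vectors nearly norming $Y$ converges weakly to $0$ and hence is unaffected, in the limit, by subtraction of any compact operator. Applying this with $Y = (1-p_0)\psi(c)p_0$ yields condition (2), completing the construction.
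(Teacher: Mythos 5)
Your proof is correct, and it is essentially the same argument the paper invokes: the paper gives no proof of its own but refers to the first part of the proof of Corollary~2 in Arveson's Duke paper, which is precisely your Hahn--Banach functional $f$ annihilating $\A$ with $f(c)=\mathrm{dist}(c,\A)$, realized as a vector functional $\langle\psi(\cdot)\xi,\eta\rangle$, with $p_0$ the projection onto $\overline{\psi(\A)\xi}$, followed by an infinite ampliation to make the norm and essential norm coincide. No discrepancies to report.
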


\begin{lem}
Let $\B$ be a separable stable C*-algebra, and let $\A$ be a separable
norm-closed unital subalgebra of the corona algebra $\Mul(\B)/\B$.

If
$x \in \Mul(\B)/\B$ is an element such that
the inclusion of the C*-algebra  $C^*(x, \A)$ generated by $x$ and $\A$
in $\Mul(\B)/\B$ is
purely large,
then there exists
a projection $p \in \Mul(\B)/\B$ such that
\begin{enumerate}
\item $p \in \Lat(\A)$, and
\item $\|(1 -p) x p \| = dist(x, \A).$
\end{enumerate}
\label{lem:Mar3020178AM}
\end{lem}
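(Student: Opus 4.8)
The plan is to feed Arveson's Lemma \ref{lem:Mar3020177AM} into the Elliott--Kucerovsky absorption theorem. Put $\C=C^*(x,\A)$, a separable unital C*-algebra, and let $\rho\colon\C\hookrightarrow\Mul(\B)/\B$ be the inclusion, which by hypothesis is a purely large (hence essential) unital extension; by Theorem \ref{thm:ElliottKucerovskyAbsorb} it is therefore absorbing in the nuclear sense. Applying Lemma \ref{lem:Mar3020177AM} to $\C$, its unital subalgebra $\A$ and $c=x$ produces a unital representation $\psi\colon\C\to\bb(\hh_\psi)$ on a separable infinite-dimensional Hilbert space, together with a projection $P\in\bb(\hh_\psi)$ satisfying $(1-P)\psi(a)P=0$ for all $a\in\A$ and $\|(1-P)\psi(x)P\|\ge dist(x,\A)$.

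The crux is to realize $\psi$ as a weakly nuclear unital trivial extension that $\rho$ can absorb. Since $\B$ is stable, $\K(\hh_\psi)\otimes\B\cong\B$, so the ampliation $\Psi_0(c)=\psi(c)\otimes 1_{\Mul(\B)}$ defines a unital $*$-homomorphism $\C\to\Mul(\K(\hh_\psi)\otimes\B)\cong\Mul(\B)$, and I set $\sigma=q\circ\Psi_0$. I would verify that $\sigma$ is weakly nuclear: compressing the first leg by finite-rank projections $P_n\uparrow 1$ exhibits, for each contraction $d$ in the ideal, the map $c\mapsto d\,\Psi_0(c)\,d^*$ as a norm-limit of maps factoring through the finite-dimensional algebras $P_n\bb(\hh_\psi)P_n$, and a norm-limit of nuclear maps is nuclear. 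I would also record that $A\mapsto q(A\otimes 1)$ is isometric on $\bb(\hh_\psi)$: for a faithful nondegenerate representation $\B\subseteq\bb(\hh_{\B})$ and the isometries $V_\eta\colon\hh_{\B}\to\hh_\psi\otimes\hh_{\B}$, $\zeta\mapsto\eta\otimes\zeta$, one has $V_\eta^*(A\otimes 1)V_{\eta'}=\langle A\eta',\eta\rangle\,1$ while $V_\eta^*(\,\cdot\,)V_{\eta'}$ carries the ideal into $\B$, so $A\otimes 1$ lies in $\K(\hh_\psi)\otimes\B$ only when $A=0$; being an injective $*$-homomorphism, the map is isometric and norms pass unchanged into the corona.

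Now fix isometries $S,T\in\Mul(\B)$ with $SS^*+TT^*=1$, set $s=q(S)$, $t=q(T)$, and form the BDF sum $\rho\oplus\sigma$. Absorption provides a unitary $u=q(U)$, $U\in\Mul(\B)$, with $\rho(c)=u\,(\rho\oplus\sigma)(c)\,u^*$ for all $c\in\C$. Writing $\bar P=q(P\otimes 1)$, a projection in $\Mul(\B)/\B$, I transport it by $q_0=t\bar P\,t^*$ and then $p=u q_0 u^*$. A direct computation using $t^*s=0$ and $t^*t=1$ gives $(1-q_0)(\rho\oplus\sigma)(y)q_0=t\,(1-\bar P)\sigma(y)\bar P\,t^*$ for every $y\in\C$. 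Since $(1-\bar P)\sigma(a)\bar P=q\big(((1-P)\psi(a)P)\otimes 1\big)=0$, the projection $q_0$, and hence $p$, lies in $\Lat(\A)$, which is (1). Taking $y=x$ and using that $t$ and $u$ preserve norms, together with the isometric embedding above, yields $\|(1-p)xp\|=\|(1-P)\psi(x)P\|\ge dist(x,\A)$, while Lemma \ref{lem:Mar3020176AM} supplies the reverse inequality; hence equality, which is (2).

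The principal obstacle is the construction in the second paragraph: turning the abstract Arveson representation $\psi$ into a genuinely weakly nuclear unital trivial extension to which Theorem \ref{thm:ElliottKucerovskyAbsorb} applies, and confirming that the compression carrying the lower bound $dist(x,\A)$ is preserved on passing into the corona of $\B$. Once $\sigma$ is correctly identified as weakly nuclear and $A\mapsto q(A\otimes 1)$ as isometric, the absorption theorem does the substantive work and the transport of $P$ through the absorbing unitary is routine bookkeeping.
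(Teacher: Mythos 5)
Your proposal is correct and follows essentially the same route as the paper: apply Lemma \ref{lem:Mar3020177AM} to $\C=C^*(x,\A)$, ampliate the resulting representation to a weakly nuclear unital trivial extension via stability of $\B$, absorb it using Theorem \ref{thm:ElliottKucerovskyAbsorb}, and transport the Arveson projection through $t$ and the absorbing unitary. The only cosmetic differences are that you tensor on the other side ($\psi(c)\otimes 1_{\Mul(\B)}$ versus the paper's $1_{\Mul(\B)}\otimes\sigma(c)$ after replacing $\B$ by $\B\otimes\K$) and that you spell out the weak nuclearity and the isometric passage into the corona, which the paper leaves implicit.
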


\begin{proof}
By Lemma \ref{lem:Mar3020176AM}, it suffices to find
a projection $p \in \Lat(\A)$ such that
$$\|(1 -p) x p \| \geq dist(x, \A).$$

Since $\B$ is stable, we may replace $\B$ by $\B \otimes \K$, where
$\K$ is the C*-algebra of compact operators on a separable infinite
dimensional Hilbert space $\hh$.

Let us denote by $\C$ the separable unital sub-C*-algebra of
$\Mul(\B \otimes \K)/(\B \otimes \K)$ generated by $x$ and $\A$.

Since $\Mul(\K) = \hh$,
let us denote by  $\sigma : \C \rightarrow \Mul(\K)$ the
unital *-homomorphism
given by Lemma \ref{lem:Mar3020177AM},
and by  $q_0 \in \Mul(\K)$ the projection such that
\begin{enumerate}
\item[i.] $(1 -q_0) \sigma(a) q_0 = 0$, for all $a \in \A$, and
\item[ii.] $\| (1 - q_0) \sigma(x) q_0 \| =
\| (1 - q_0) \sigma(x) q_0 + \K \|_{\Mul(\K)/\K}
\geq dist(x, \A)$.
\end{enumerate}

Note that since $\K$ is  nuclear,
the unital trivial extension
\begin{equation}
\C \rightarrow \Mul(\B \otimes \K)/\B \otimes \K : c \mapsto
\pi (1_{\Mul(\B)} \otimes \sigma(c))
\label{equ:Oct182017}
\end{equation}
is weakly nuclear, where $\pi$ denotes the canonical surjection from
$\Mul(\B \otimes \K)$ onto the corona algebra $\Mul(\B \otimes \K)/ \B \otimes \K$.

For any two isometries $S, T \in \Mul(\B \otimes \K)$  such that
$1_{\Mul(\B \otimes \K)} = S S^* + T T^*$, let
$s = \pi(S)$ and
$t = \pi(T)$  denote the corresponding
two isometries of $\Mul(\B \otimes \K)/\B \otimes \K$.

As by assumption the inclusion of $\C$ in $\Mul(\B \otimes \K)/\B \otimes \K$
is purely large, and as the trivial extension (\ref{equ:Oct182017})
is weakly nuclear, then by Theorem
\ref{thm:ElliottKucerovskyAbsorb},
there exists a unitary
$U \in \Mul(\B \otimes \K)$ such that
for all $c \in \C$,
$$c = u (s c s^* + t \pi (1_{\Mul(\B)}
\otimes \sigma(c)) t^*) u^*$$
where $u = \pi(U)$.

Let $p$ be the projection of $\Mul(\B \otimes \K)/(\B \otimes \K)$
given by
$$p =_{df} u t \pi (1_{\Mul(\B)} \otimes q_0) t^* u^*,$$
and let us
show that $p$ is the required projection.

As $(1-p) a p = u t \pi(1_{\Mul(\B)} \otimes (1_{\Mul(\K)} - q_0) \sigma(a) q_0)
t^* u^* = 0$ for all $a \in \A$,
we have that $p \in \Lat(\A)$.

Moreover,
\begin{eqnarray*}
& & \| (1 - p) x p \| \\
& = & \| (1 - p) u t \pi(1_{\Mul(\B)} \otimes \sigma(x)) t^* u^* u t \pi(1_{\Mul(\B)}
\otimes q_0) t^* u^* \| \\
& = & \| (1 - p) u t \pi(1_{\Mul(\B)} \otimes \sigma(x) q_0)  \| \\
& = & \| [us s^*u^* + u t \pi(1_{\Mul(\B)} \otimes (1_{\Mul(\K)}- q_0)) t^* u^*]u t
\pi(1_{\Mul(\B)} \otimes \sigma(x) q_0)  \| \\
& = & \| u t \pi(1_{\Mul(\B)} \otimes (1_{\Mul(\K)} - q_0)\sigma(x) q_0)  \| \\
& = & \| (1 - q_0) \sigma(x) q_0 + \K \|_{\Mul(\K)/\K} \\
& \geq & dist(x, \A) \makebox{    (by ii).} \\
\end{eqnarray*}

\end{proof}

\begin{thm}
Suppose that either $\B \cong \K$ or
$\B$ is a separable simple stable purely infinite C*-algebra.
Let $\A$ be a norm-closed separable unital subalgebra of
the corona algebra $\Mul(\B)/\B$, and $\Lat(\A)$ be the
set of all projections $e \in \Mul(\B)/\B$ such that
$(1 - e) a e = 0$, for all $a \in \A$.

Then the algebra of all elements of $\Mul(\B)/\B$ leaving
invariant each projection of $\Lat(\A)$ is equal to $\A$.


\label{thm:main1}
\end{thm}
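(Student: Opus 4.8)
The plan is to prove the two inclusions $\A \subseteq \Alg(\Lat(\A))$ and $\Alg(\Lat(\A)) \subseteq \A$ separately, with all the real content residing in the second. The first is immediate from the definitions: if $a \in \A$ and $e \in \Lat(\A)$, then $(1-e)ae = 0$ by the very definition of $\Lat(\A)$, so $a \in \Alg(\Lat(\A))$.

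For the reverse inclusion I would fix an arbitrary $x \in \Alg(\Lat(\A))$ and aim to show that $dist(x, \A) = 0$; since $\A$ is norm-closed this yields $x \in \A$. The strategy is to produce a \emph{single} projection $p \in \Lat(\A)$ for which $\|(1-p)xp\| = dist(x, \A)$, and then to exploit the hypothesis $x \in \Alg(\Lat(\A))$, which forces $(1-p)xp = 0$ for every $p \in \Lat(\A)$, in particular for this one. Combining the two gives $dist(x, \A) = \|(1-p)xp\| = 0$, hence $x \in \A$.

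To produce such a projection I would invoke Lemma \ref{lem:Mar3020178AM} applied to the element $x$ and the subalgebra $\A$. The hypothesis there is that the inclusion of the C*-algebra $C^*(x, \A)$ into $\Mul(\B)/\B$ be purely large. Now $C^*(x, \A)$ is a separable unital sub-C*-algebra of $\Mul(\B)/\B$, and the inclusion map is a unital essential extension of $\B$ by $C^*(x, \A)$ (it is essential because the inclusion, being injective, has injective Busby invariant). The hypotheses on $\B$ are precisely statement (1) of Proposition \ref{prop:SimpleCorona}, so statement (5) of that proposition holds: every unital essential extension of $\B$ by a unital separable C*-algebra is purely large. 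Applying this with $C^*(x, \A)$ as the coefficient algebra shows the inclusion is purely large, which is exactly the input Lemma \ref{lem:Mar3020178AM} requires.

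The main obstacle --- really the conceptual heart of the argument --- has already been packaged into the two cited results: Lemma \ref{lem:Mar3020178AM} encapsulates the absorption (Weyl--von Neumann) step via the Elliott--Kucerovsky theorem, while Proposition \ref{prop:SimpleCorona} supplies the pure largeness needed to invoke it. Within the present theorem the only points requiring genuine care are the verification that the inclusion of $C^*(x, \A)$ is a unital \emph{essential} extension, so that statement (5) of Proposition \ref{prop:SimpleCorona} applies, and the elementary observation that $p \in \Lat(\A)$ together with $x \in \Alg(\Lat(\A))$ forces $(1-p)xp = 0$. Once the projection $p$ from Lemma \ref{lem:Mar3020178AM} is in hand, the chain $dist(x,\A) = \|(1-p)xp\| = 0$ closes the argument.
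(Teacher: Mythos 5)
Your proposal is correct and follows essentially the same route as the paper's own proof: both reduce the problem to producing, via Lemma \ref{lem:Mar3020178AM}, a single projection $p \in \Lat(\A)$ with $\|(1-p)xp\| = dist(x,\A)$, after using Proposition \ref{prop:SimpleCorona} to verify that the inclusion of $C^*(x,\A)$ is purely large. Your added remarks on why the inclusion is an essential extension and on the role of norm-closedness of $\A$ are details the paper leaves implicit, but the argument is the same.
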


As for any sub-C*-algebra $\A$ of
a unital C*-algebra $\D$, we have

$$\A \subseteq (\A' \cap \Mul(\B)/\B)' \cap \Mul(\B)/\B
\subseteq \Alg(\Lat(\A)), $$
the following corollary is a direct consequence of Theorem \ref{thm:main1}:

\begin{cor}
If $\B \cong \K$ or
$\B$ is a separable simple stable purely infinite C*-algebra,
and $\A$ is a separable unital sub-C*-algebra of $\Mul(\B)/\B$,
then $\A$ is equal to its relative bicommutant, i.e.,
$$\A = (\A' \cap \Mul(\B)/\B)' \cap \Mul(\B)/\B.$$
\label{cor:main2}
\end{cor}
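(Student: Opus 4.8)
The plan is to deduce the relative bicommutant equality by sandwiching the bicommutant between $\A$ and $\Alg(\Lat(\A))$ and then collapsing the resulting chain using Theorem \ref{thm:main1}. In this way all the genuine analytic content stays inside that theorem, and the corollary reduces to two entirely elementary inclusions together with a check of hypotheses.

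First I would record the always-valid inclusion $\A \subseteq (\A' \cap \Mul(\B)/\B)' \cap \Mul(\B)/\B$. Indeed, any $a \in \A$ commutes with every element of $\A' \cap \Mul(\B)/\B$ by the very definition of the commutant, and $a$ lies in $\Mul(\B)/\B$; hence $a$ belongs to the displayed double commutant. This step uses nothing beyond the definitions and holds for an arbitrary subalgebra of a unital C*-algebra.

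Next I would verify $(\A' \cap \Mul(\B)/\B)' \cap \Mul(\B)/\B \subseteq \Alg(\Lat(\A))$. The key is the identification noted before the statement, namely $\Lat(\A) = \{ e \in \Proj(\Mul(\B)/\B) : e \in \A' \}$; thus every $e \in \Lat(\A)$ is a projection lying in $\A' \cap \Mul(\B)/\B$. Consequently, if $y$ belongs to the double commutant, then $y$ commutes with each such $e$, so $(1-e) y e = (1-e) e y = 0$, which is precisely the defining condition for $y \in \Alg(\Lat(\A))$. Combining the two inclusions yields the chain
$$\A \subseteq (\A' \cap \Mul(\B)/\B)' \cap \Mul(\B)/\B \subseteq \Alg(\Lat(\A))$$
already recorded in the excerpt.

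Finally I would invoke Theorem \ref{thm:main1}, which applies because a separable unital sub-C*-algebra is in particular a norm-closed separable unital subalgebra, to obtain $\Alg(\Lat(\A)) = \A$. Substituting this equality into the chain forces both inclusions to be equalities, so the middle term reads $\A = (\A' \cap \Mul(\B)/\B)' \cap \Mul(\B)/\B$, as claimed. I expect essentially no obstacle in this argument: the entire difficulty has already been absorbed into Theorem \ref{thm:main1}, and the corollary is a purely formal consequence of it together with the two elementary inclusions above. The only point deserving a word of care is confirming that the hypotheses of Theorem \ref{thm:main1} (a norm-closed separable unital subalgebra, not assumed self-adjoint) are met by a separable unital sub-C*-algebra, which they trivially are.
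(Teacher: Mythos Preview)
Your proposal is correct and follows precisely the paper's own approach: the paper records the chain $\A \subseteq (\A' \cap \Mul(\B)/\B)' \cap \Mul(\B)/\B \subseteq \Alg(\Lat(\A))$ (using that $\Lat(\A)$ for a sub-C*-algebra consists of projections in $\A'$) and then collapses it via Theorem~\ref{thm:main1}. There is nothing to add.
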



\begin{proof}[Proof of Theorem \ref{thm:main1}]
Let $$\Alg(\Lat(\A)) =_{df}
\{ x \in \Mul(\B)/\B : (1 - e)x e = 0, \forall e \in
\Lat(\A) \}$$
denote the algebra of elements of $\Mul(\B)/\B$ leaving
invariant $\Lat(\A)$.

As $\A$ is clearly contained in $\Alg(\Lat(\A))$, let us
prove the other inclusion.

Let $x \in \Alg(\Lat(\A))$ and let
$\C = C^*(x, \A)$ be the separable unital sub-C*-algebra
of $\Mul(\B)/\B$ generated by $x$ and $\A$.
By Proposition \ref{prop:SimpleCorona},
the inclusion of
$\C$ into  $\Mul(\B)/\B$ is purely large.
Hence,
by Lemma \ref{lem:Mar3020178AM}, there exists a projection
$p \in \Lat(\A) \subset \Mul(\B)/\B$ such that
$$\| (1-p) x p \| = dist(x, \A).$$

As $x \in \Alg(\Lat(\A))$ and
$\A$ is closed, $x \in \A$.
\end{proof}

\section{Pedersen's question}

Theorem \ref{thm:Reflexive}, the main result of this section generalizes
Theorem \ref{thm:main1}, by relaxing the separability condition on
the C*-algebra $\B$.  Then in Corollary \ref{cor:main3},
we give a complete answer in the stable case to Pedersen's question
in \cite{PedersenCC}.

Before stating and proving these results, let us first introduce some
preliminaries needed below.

\begin{lem}
Let $\B$ be a simple, purely infinite C*-algebra, and let
$a, b \in \B_+$, $\| a \| = \| b \| = 1$.

Then for any $0 < \delta < 1$, and for any $\epsilon > 0$,
there exists $x \in \B$ such that
$$\| a - x b x^* \| \leq \epsilon \makebox{  and  }
\| x \|^2 \leq \frac{1}{\delta}.$$
\label{lem:CuntzWithControl}
\end{lem}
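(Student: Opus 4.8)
The plan is to prove Lemma \ref{lem:CuntzWithControl} using the characterization of simple purely infinite C*-algebras in terms of Cuntz comparison. Recall that in a simple purely infinite C*-algebra $\B$, any two nonzero positive elements are Cuntz equivalent, and more precisely, for $a, b \in \B_+$ nonzero, there exist elements realizing the Cuntz subequivalence $a \preceq b$. The standard definition of $a \preceq b$ is that there exists a sequence $(x_n)$ in $\B$ with $\|a - x_n b x_n^*\| \to 0$. The content of this lemma is to produce a single $x$ achieving the approximation $\|a - x b x^*\| \leq \epsilon$ \emph{together with} the norm bound $\|x\|^2 \leq 1/\delta$. So the real work is quantitative norm control on the implementing element, not merely the existence of subequivalence.

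**First I would** use the hypothesis $\|a\| = 1$ to locate a spectral piece of $a$ of norm close to $1$ that I can handle cleanly. Concretely, applying functional calculus to $a$, let $a_0 = (a - (1-\eta))_+$ for a small $\eta > 0$; this is a nonzero positive element with $a \approx a_0$ up to a controlled error in the relevant sense, and crucially $\|a_0\| \le \eta$ while the construction lets me rescale. The cleaner route, however, exploits that in a purely infinite simple C*-algebra one has \textbf{exact} (not just approximate) subequivalence for the cut-down elements: for any $\epsilon' > 0$ there is $x_0 \in \B$ with $(a - \epsilon')_+ = x_0 b x_0^*$ exactly. This is a well-known strengthening (Kirchberg–Rørdam / Rørdam's comparison results) available in the purely infinite simple setting. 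I would take $\epsilon'$ small enough that $\|a - (a-\epsilon')_+\| \le \epsilon$, which gives $\|a - x_0 b x_0^*\| \le \epsilon$ for free.

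**The hard part will be** converting the exact factorization into the norm bound $\|x\|^2 \leq 1/\delta$, and this is where $\delta$ enters. The key observation is that if $(a - \epsilon')_+ = x_0 b x_0^*$, then I have freedom to modify $x_0$ on the part of $b$ where $b$ is bounded below: since $\|b\| = 1$, the element $b$ has spectrum reaching up near $1$, so I can arrange the factorization to run through a spectral projection (or cut-down) of $b$ supported where $b \ge \delta$. Writing $f_\delta$ for a continuous function with $f_\delta(t) = t^{-1/2}$ for $t \ge \delta$ and $0 \le f_\delta \le \delta^{-1/2}$, one replaces $x_0$ by $x = x_0 \, g(b)$ for suitable $g$ and estimates $\|x\|^2$ by pushing the $b^{-1/2}$ through: roughly, $x b x^*$ with $x = y\, b^{1/2} f_\delta(b)^{?}$ gives $\|x\|^2 \le \|x_0\|^2 \cdot \|f_\delta(b)\|$-type control, forcing the $1/\delta$ bound. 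The delicate point is ensuring that cutting $b$ down to its $\{t \ge \delta\}$ spectral region does not destroy the approximation $\|a - x b x^*\| \le \epsilon$ — but because $\|b\|=1$, the element $(b-\delta)_+$ is still nonzero and full (by simplicity), so one can re-run the comparison argument with $(b-\delta)_+$ in place of $b$ and absorb the discrepancy into $\epsilon$.

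**In summary**, the skeleton I would carry out is: (i) replace $a$ by a cut-down $(a-\epsilon')_+$ with error $\le \epsilon$; (ii) invoke exact subequivalence in the simple purely infinite algebra to get $(a-\epsilon')_+ = x_0 b x_0^*$; (iii) route the factorization through the spectral part of $b$ bounded below by $\delta$, using functional calculus $f_\delta$ with $\|f_\delta\| \le \delta^{-1/2}$ to absorb the inverse of $b^{1/2}$ into the implementing element, yielding $\|x\|^2 \le 1/\delta$; (iv) check that the spectral truncation of $b$ preserves the approximation within $\epsilon$ by re-comparing against the still-nonzero full element $(b-\delta)_+$. I expect step (iii) to be the crux, since it is the only place where the quantitative bound $1/\delta$ is genuinely produced, and it requires carefully separating the "inverse" of $b$ on its large spectral part from the comparison element.
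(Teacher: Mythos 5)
Your proposal is correct and follows essentially the same route as the paper: cut $a$ down to $(a-\epsilon)_+$, use simple pure infiniteness to compare against $(b-\delta)_+$ (the part of $b$ bounded below by $\delta$), and extract the bound $\|x\|^2 \le 1/\delta$ from that spectral cut-down. The only difference is that the paper outsources your step (iii) --- the quantitative passage from $a \preceq (b-\delta)_+$ to an exact factorization $(a-\epsilon)_+ = x b x^*$ with $\|x\|^2 \le 1/\delta$ --- to Lemma 2.2 of Kaftal--Ng--Zhang, whereas you sketch re-deriving it by functional calculus.
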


\begin{proof}
For $0 < \delta < 1$, fixed, we have $\| b - (b - \delta)_+ \| \leq \delta$
and as $\B$ is simple purely infinite, $a \preceq (b - \delta)_+$.
Then for any $\epsilon > 0$,
there exists by \cite{KaftalNgZhang}, Lemma 2.2,
$x \in \B$ with $\| x \|^2 \leq \frac{1}{\delta}$ and
$(a - \epsilon)_+ = xb x^*$.
Then
$\| a - x b x^* \| \leq \| a - (a - \epsilon)_+ \| \leq \epsilon$.
\end{proof}

Let $\C$ be a separable sub-C*-algebra of a $\sigma$-unital, stable,
simple, purely infinite C*-algebra $\B$,
and let $\{ c_n \}_{n \geq 1}$ be a countable dense subset of the unit
sphere of $\C_+$.

By Lemma \ref{lem:CuntzWithControl},
for each $n, m$ and $k \geq 1$, there exists $y_{n,m, k} \in \B$, such that
$\| y_{n,m, k} \|^2 \leq \frac{3}{2}$ and
$$\| y_{n,m,k} c_n y_{n,m,k}^* - c_m \| < \frac{1}{k}.$$

Then the separable sub-C*-algebra $\widetilde{\C}$ of $\B$ generated by
$\C$ and $\{ y_{n,m,k} : n, m, k \geq 1 \}$ is said to be obtained from
$\C$ by a \emph{PI-operation}.

 Using this construction inductively, we have the following:

\begin{prop}
Let  $\B$ be a $\sigma$-unital, stable, simple purely infinite C*-algebra,
and let $\C \subseteq \B$ be a separable sub-C*-algebra.

Then there exists
a separable, stable, simple, purely infinite sub-C*-algebra $\D$ of $\B$
containing $\C$.
\label{prop:SeparableSubalgebra}
\end{prop}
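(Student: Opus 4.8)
The plan is to build $\D$ as the closure of an increasing union of separable sub-C*-algebras, where each successive algebra is obtained from the previous one by a PI-operation, so that in the limit the two defining features we need --- stability and simple pure infiniteness --- are forced to hold. Concretely, I would set $\D_0 = \C$ and, having constructed the separable sub-C*-algebra $\D_n \subseteq \B$, let $\D_{n+1}$ be the algebra obtained from $\D_n$ by a PI-operation (as defined just above the proposition). Each $\D_{n+1}$ is again separable, so the construction is legitimate at every stage. I would then define $\D = \overline{\bigcup_{n \geq 0} \D_n}$, which is a separable sub-C*-algebra of $\B$ containing $\C$. The point of iterating, rather than applying the PI-operation once, is that the elements $y_{n,m,k}$ witnessing Cuntz comparisons between the dense positive elements of $\D_n$ live in $\D_{n+1}$, not in $\D_n$ itself; only after passing to the inductive limit do all the required comparison elements end up inside $\D$.

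Next I would verify simple pure infiniteness of $\D$. I would fix two nonzero positive elements $a, b \in \D$ and reduce to dense positive elements by an approximation argument: it suffices to find, for arbitrary normalized $c_n, c_m$ lying in one of the dense sets used at some stage $\D_N$, an element of $\D$ conjugating one into a small neighbourhood of the other. By construction, for each $k$ there is $y_{n,m,k} \in \D_{N+1} \subseteq \D$ with $\|y_{n,m,k} c_n y_{n,m,k}^* - c_m\| < 1/k$ and norm control $\|y_{n,m,k}\|^2 \leq 3/2$. Letting $k \to \infty$ gives $c_m \preceq c_n$ inside $\D$, and by symmetry the Cuntz classes of all normalized positive elements coincide; standard perturbation estimates then upgrade this to the statement that every nonzero positive element of $\D$ is full and that $\D$ has no nonzero bounded traces and no characters, i.e., $\D$ is simple and purely infinite. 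The norm bound $\|y_{n,m,k}\|^2 \leq 3/2$ supplied by Lemma \ref{lem:CuntzWithControl} is what keeps these comparison elements from escaping to infinity and is therefore essential for the limiting argument.

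It remains to arrange that $\D$ is stable. Here I would enlarge the PI-operation construction slightly: in addition to the comparison elements $y_{n,m,k}$, I would throw into each stage a sequence of elements realizing the Hjelmborg--Rørdam stability criterion --- for a $\sigma$-unital C*-algebra, stability is equivalent to the condition that for every positive element and every $\epsilon$ one can find an element approximately orthogonal to it and Cuntz-dominating it. Since $\B$ itself is stable and the required witnesses exist in $\B$, I can adjoin countably many of them at each stage so that in the limit $\D$ inherits the criterion. The main obstacle I anticipate is precisely this stability step: unlike simple pure infiniteness, which follows almost formally from the PI-operation as already defined, stability is not an obviously ``local'' property, and one must be careful that the approximately orthogonal witnesses adjoined at stage $n$ remain valid against the new elements appearing at later stages. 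The remedy is the usual diagonal/back-and-forth bookkeeping: enumerate the positive elements arising across all stages and ensure each is eventually served, so that the inductive limit genuinely satisfies the stability criterion rather than merely its finite approximations.
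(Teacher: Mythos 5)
Your treatment of simplicity and pure infiniteness is essentially the paper's argument: iterate the PI-operation, take the closed union, and use the uniform norm bound $\|y_{n,m,k}\|^2 \leq 3/2$ to push approximate Cuntz comparison of the dense positive elements at each finite stage through to exact comparison in the limit algebra. Where you genuinely diverge is stability, and this is also the weakest part of your proposal. You start from $\D_0 = \C$ and then try to manufacture stability by adjoining witnesses for the Hjelmborg--R{\o}rdam criterion at every stage with a diagonal bookkeeping argument; this can probably be made to work, but it is exactly the step you yourself flag as the main obstacle, and you leave it as a sketch rather than a verification (one has to check that the criterion only needs to be tested on a dense set of positive elements, and that the $\epsilon$-perturbations are compatible with the enumeration). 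The paper avoids all of this with a one-line device: it starts the induction not from $\C$ but from $\C_0 = C^*(\C, b)$ where $b$ is a strictly positive element of $\B$, and then invokes R{\o}rdam's result (\emph{Stable C*-algebras}, Corollary 2.3) that any sub-C*-algebra of a stable $\sigma$-unital C*-algebra containing a strictly positive element of the ambient algebra is automatically stable. With that seed, every $\C_l$ and the limit $\D$ are stable for free, and no extra witnesses or back-and-forth argument are needed. Your approach buys nothing over this except independence from that particular citation, at the cost of a delicate limiting argument; if you keep your route, you must actually carry out the verification that the stability criterion survives the inductive limit, since as written that step is asserted rather than proved.
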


\begin{proof}
Let $b \in \B_+$ be a strictly positive element, and let
$\C_0$ be the (separable) sub-C*-algebra generated by $\C$ and $b$.\
By \cite{RordamStableSurvey} Corollary 2.3,
any sub-C*-algebra of $\B$ containing $\C_0$ is stable.
Then construct an increasing sequence $\{ \C_l \}_{l \geq 0 }$ of
separable sub-C*-algebras of $\B$ such that $\C_{l+1}$ is obtained from
$\C_l$ by a PI-operation, and
$\D =_{df} \overline{\bigcup_{l \geq 1} \C_l }$.
Then $\D$ contains $\C_0$, is stable, and simple (by similar argument as in
\cite{BlackadarGeneralBook} II.8.5.6).

To prove that $\D$ is purely infinite, it is enough to show that for any
$c, d \in \D_+$, $\| c \| = \| d \| = 1$, for any $\epsilon > 0$, there exists
$y \in \D$ such that
$$\| y c y^* - d \| < \epsilon.$$

For $l \geq 0$, let $\{ c^l_n \}_{n \geq 1}$ denote the dense sequence of the
unit sphere of $(\C_l)_+$, used in the construction of $\C_{l+1}$.
Then choose $l \geq 1$ and $c^l_n, c^l_m \in (\C_l)_+$ such that
$$\| c - c^l_n \| \leq \frac{\epsilon}{4}  \makebox{  and  }
\| d - c^l_m \| \leq \frac{\epsilon}{3}.$$

By definition of $\C_{l+1}$, there exists a $y \in \C_{l+1}$,
$\| y \|^2 \leq \frac{3}{2}$ such that
$$\| c^l_m - y c^l_n y^* \| \leq \frac{\epsilon}{3}.$$

Hence,
$$\| d - y c y^* \| \leq \frac{\epsilon}{3} + \frac{\epsilon}{3}
+ \| y (c^l_n - c) y^* \|
\leq \frac{\epsilon}{3} + \frac{\epsilon}{3} + \frac{3}{2} * \frac{\epsilon}{4}
\leq \epsilon.$$

\end{proof}

\begin{thm}
Let $\B$ be either the compact operators $\K$ or a $\sigma$-unital,
stable, simple, purely infinite C*-algebra.

Let $\A$ be a separable, norm-closed, unital subalgebra of $\Mul(\B)/\B$.

Then $$\A = \Alg(\Lat(\A)).$$
\label{thm:Reflexive}
\end{thm}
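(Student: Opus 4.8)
The plan is to reduce the non-separable statement to the already-established separable one (Theorem \ref{thm:main1}) by trapping the relevant data inside a separable corona that embeds isometrically into $\Mul(\B)/\B$. The inclusion $\A \subseteq \Alg(\Lat(\A))$ is immediate, so I fix $x \in \Alg(\Lat(\A))$ and aim to show $x \in \A$. First I would choose a countable dense subset $\{ a_i \}$ of $\A$, lift it together with $x$ to a countable self-adjoint set $F \subseteq \Mul(\B)$ containing $1$, and fix a strictly positive element $b_0$ of $\B$.

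The heart of the construction is to produce a separable sub-C*-algebra $\D \subseteq \B$ that is simultaneously (i) stable, simple and purely infinite, (ii) contains $b_0$, and (iii) is normalized by $F$, in the sense that $F \D \subseteq \D$ and $\D F \subseteq \D$. I would obtain this by running the inductive PI-operation construction of Proposition \ref{prop:SeparableSubalgebra}, but interleaving each PI-operation with a step that enlarges the current $\C_l$ so as to absorb $F\C_l \cup \C_l F$; the closure $\D$ of the resulting union is then stable, simple and purely infinite exactly as in Proposition \ref{prop:SeparableSubalgebra}, contains $b_0$, and satisfies $F\D \subseteq \D$ and $\D F \subseteq \D$ by construction.

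Because $b_0$ is strictly positive in $\B$ and lies in $\D$, the functional calculus $\{ g_n(b_0) \} \subseteq \D$ provides an approximate unit of $\B$; hence the inclusion $\D \hookrightarrow \B \subseteq \Mul(\B)$ is nondegenerate and extends to a unital $*$-homomorphism $\bar\iota : \Mul(\D) \to \Mul(\B)$. Since $\bar\iota(\D) = \D \subseteq \B$, it descends to a unital $*$-homomorphism $\Theta : \Mul(\D)/\D \to \Mul(\B)/\B$, which is automatically injective, hence isometric, because $\Mul(\D)/\D$ is simple by Proposition \ref{prop:SimpleCorona}. The normalization condition (iii) lets each element of $F$ be recovered as $\bar\iota$ applied to its restriction to $\Mul(\D)$, so the lifts—and therefore $\A$ and $x$—lie in the image of $\Theta$; writing $\hat\A \subseteq \Mul(\D)/\D$ and $\hat x$ for the corresponding preimages, I obtain a separable, norm-closed, unital $\hat\A$ with $\Theta(\hat\A) = \A$ and $\Theta(\hat x) = x$.

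Finally I would transfer the lattice data through $\Theta$. For any $\hat e \in \Lat_{\D}(\hat\A)$, the projection $\Theta(\hat e)$ lies in $\Lat_{\B}(\A)$, so $x \in \Alg(\Lat(\A))$ forces $(1 - \Theta(\hat e)) \Theta(\hat x) \Theta(\hat e) = 0$; injectivity of $\Theta$ then yields $(1 - \hat e) \hat x \hat e = 0$. As $\hat e$ was arbitrary, $\hat x \in \Alg_{\D}(\Lat_{\D}(\hat\A))$, which equals $\hat\A$ by Theorem \ref{thm:main1} (applicable since $\D$ is separable, stable, simple and purely infinite). Hence $x = \Theta(\hat x) \in \Theta(\hat\A) = \A$. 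I expect the main obstacle to be the simultaneous construction of $\D$ in the second step—maintaining stability, simplicity and pure infiniteness while also closing up under the normalizing action of $F$—together with the verification that $F$ genuinely lands in the image of $\bar\iota$; the descent of $\bar\iota$ to an injective corona map $\Theta$ is then a soft consequence of the simplicity of $\Mul(\D)/\D$.
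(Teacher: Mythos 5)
Your argument is correct, and its overall architecture is the same as the paper's: trap everything inside a separable, stable, simple, purely infinite $\D \subseteq \B$ containing a strictly positive element of $\B$, use that the induced unital map $\Mul(\D)/\D \to \Mul(\B)/\B$ is injective (hence isometric) because $\Mul(\D)/\D$ is simple, transfer the $\Lat$/$\Alg$ conditions through that embedding, and finish with Theorem \ref{thm:main1}. Where you genuinely diverge is in how the lifts of $x$ and of a dense sequence of $\A$ are forced into (the image of) $\Mul(\D)$. The paper first applies the bidiagonal decomposition theorem of \cite{KaftalNgZhang} (Theorem 4.2) to write each lift, modulo an element of $\B$, as a linear combination of bidiagonal series $\sum_n t_n$ with $t_n \in \B_+$ and $t_k t_l = 0$ for $|k-l|\geq 2$; it then puts the countably many terms $t_n$ into the separable seed algebra, and the bidiagonal structure is what guarantees the series still converge strictly over $\D$, so the lifts lie in $\Mul(\D)$ on the nose. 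You instead interleave the PI-operations with steps that close the chain under left and right multiplication by the countable self-adjoint set $F$ of lifts, so that each $T \in F$ restricts to a double centralizer of $\D$ and is recovered as $\bar\iota(T|_{\D})$ via nondegeneracy of $\D \subseteq \B$. This replaces the bidiagonal machinery by an elementary closing-up argument, which is arguably softer; the only points you should spell out are that the $F$-absorption steps preserve separability (clear, since $F$ is countable) and that the PI-operation steps remain cofinal in the chain, so the stability, simplicity and pure-infiniteness arguments of Proposition \ref{prop:SeparableSubalgebra} apply verbatim to the interleaved union. After that, the two proofs conclude identically.
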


Recall (\cite{LinContScaleI} Theorem 3.8)
that if $\B$ is a  stable $\sigma$-unital C*-algebra,
then the corona algebra $\Mul(\B)/\B$ is simple if and only if either
$\B \cong \K$ or $\B$ is simple purely infinite.

Then the following corollary, which follows immediately from
Theorem \ref{thm:Reflexive}, resolves the stable case of Pedersen's
question.

\begin{cor}
Suppose that either $\B \cong \K$ or  $\B$ is
 a $\sigma$-unital stable  simple purely infinite C*-algebra.

Let $\A$ be a separable unital sub-C*-algebra of $\Mul(\B)/\B$.

Then $$(\A' \cap \Mul(\B)/\B)' \cap \Mul(\B)/\B = \A.$$
\label{cor:main3}
\end{cor}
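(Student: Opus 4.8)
The plan is to deduce this corollary formally from Theorem \ref{thm:Reflexive} by trapping the relative bicommutant between $\A$ and $\Alg(\Lat(\A))$. Write $\D = \Mul(\B)/\B$. The hypotheses on $\B$ here are exactly those of Theorem \ref{thm:Reflexive}, and since a unital sub-C*-algebra is in particular a norm-closed unital subalgebra, $\A$ satisfies the hypotheses of that theorem. Hence I would begin by invoking it to record the single nontrivial input
$$\A = \Alg(\Lat(\A)).$$
Everything else is bookkeeping around the commutant.

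Next I would establish the two-step inclusion chain
$$\A \subseteq (\A' \cap \D)' \cap \D \subseteq \Alg(\Lat(\A)).$$
The left-hand inclusion is immediate, since each $a \in \A$ commutes with every element of $\A' \cap \D$ and therefore lies in $(\A' \cap \D)' \cap \D$. The right-hand inclusion is where self-adjointness enters, so I would first verify that $\Lat(\A)$ is precisely the set of projections of $\D$ lying in $\A'$ (the observation already recorded in the preliminaries). Indeed, if $e \in \Proj(\D)$ satisfies $(1-e)ae = 0$ for all $a \in \A$, then $ae = eae$ for every $a$; applying this to $a^* \in \A$ and taking adjoints (using $e = e^*$) gives $ea = eae$, so $ae = ea$ and $e \in \A' \cap \D$. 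Consequently, any $y \in (\A' \cap \D)' \cap \D$ commutes with each $e \in \Lat(\A)$, whence $(1-e)ye = (1-e)ey = 0$ and $y \in \Alg(\Lat(\A))$.

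Combining the chain with the equality from Theorem \ref{thm:Reflexive} then forces
$$\A \subseteq (\A' \cap \D)' \cap \D \subseteq \Alg(\Lat(\A)) = \A,$$
so all the inclusions collapse to equalities and in particular $(\A' \cap \D)' \cap \D = \A$, which is the assertion. I do not expect a genuine obstacle: the entire analytic content is absorbed into Theorem \ref{thm:Reflexive}. The only point requiring care is the identification of $\Lat(\A)$ with the projections of the relative commutant $\A'$, which genuinely uses that $\A$ is $*$-closed --- this is exactly why the corollary is stated for a sub-C*-algebra rather than for an arbitrary norm-closed subalgebra, for which $\Lat(\A)$ need not consist of commuting projections and the bicommutant comparison can fail.
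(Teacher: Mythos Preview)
Your proposal is correct and follows essentially the same approach as the paper: the paper records in Section~3 the inclusion chain $\A \subseteq (\A' \cap \D)' \cap \D \subseteq \Alg(\Lat(\A))$ for sub-C*-algebras (noting that $\Lat(\A)$ coincides with the projections in $\A'$), and then states that the corollary follows immediately from Theorem~\ref{thm:Reflexive}. You have simply spelled out these steps in more detail, including the verification that self-adjointness of $\A$ is what forces projections in $\Lat(\A)$ to commute with $\A$.
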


In the proof of Theorem \ref{thm:Reflexive}, we will need the following
notions and results from \cite{KaftalNgZhang}:

Let $\B$ be a $\sigma$-unital simple C*-algebra.  Then a sequence
$\{ x_l \}_{l \geq 1}$ of $\B_+$ gives a \emph{bidiagonal series}
$X$ if $X = \sum_{l \geq 1} x_l$ converges in the strict topology and
$x_k x_l = 0$ for all $|k - l| \geq 2$.

If $T \in \Mul(\B)_+$, then by \cite{KaftalNgZhang}, Theorem 4.2,
for any $\epsilon > 0$, there exist a bidiagonal series
$\sum_{k \geq 1} t_k$ and a self-adjoint element $a_{\epsilon} \in \B$ with
$\| a_{\epsilon} \| < \epsilon$ and
$$T = \sum_{k \geq 1} t_k + a_{\epsilon}.$$

\begin{proof}[Proof of Theorem \ref{thm:Reflexive}]
To simplify notation, let us denote $\C(\B) =_{df} \Mul(\B)/\B$.

As $\A$ is already contained in $\Alg_{\C(\B)}(\Lat_{\C(\B)}(\A))$,
let us prove that
if
$x \in \Alg_{\C(\B)}(\Lat_{\C(\B)}(A))$  then $x \in \A$.

Let $\{ a_l \}_{l=1}^{\infty}$ be a dense sequence in $\A$
and choose $X$ and $A_l$ in $\Mul(\B)$ such that
$\pi(X) = x$ and $\pi(A_l) = a_l$ for $l \geq 1$ (where $\pi : \Mul(\B)
\rightarrow \Mul(\B)/\B$ is the canonical quotient map).
By \cite{KaftalNgZhang}, Theorem 4.2,
there exist for $l \geq 1$ and $1 \leq j \leq 4$,
bidiagonal series $X_j$ and $A_{l,j}$ in $\Mul(\B)_+$
and $y, c_l \in \B$,
such that
$$X = \sum_{j=1}^4 i^j X_j + y \makebox{  and  }
A_l = \sum_{j=1}^4 i^j A_{l,j} + c_l,$$
Then, let us write
$X_j = \sum_{n \geq 1} x_{j,n}$ and $A_{l,j} = \sum_{n \geq 1} d_{l,j,n}$,
where for $1 \leq j \leq 4$ and $l \geq 1$,
$\{ x_{j,n}, d_{l,j,n} : n \geq 1 \}
\subset \B_+$  and  the
series  converge in the  strict topology.

Let $b \in \B_+$ be strictly positive.
Let $\C$ denote the
separable sub-C*-algebra of $\B$, generated by
$\{b, y, c_l, x_{j,n}, d_{l,j,n} : 1 \leq j \leq 4, l, n\geq 1,  \}$.
By Proposition \ref{prop:SeparableSubalgebra},
let $\D$ be a separable, stable, simple, purely infinite sub-C*-algebra
of $\B$ containing $\C$.

For $l \geq 1$ and $1 \leq j \leq 4$, by construction,
$X_j$ and $A_{l,j}$ belong to $\Mul(\D)$, and therefore,
$X, A_l \in \Mul(\D)$.

Since $b \in \D$,
the inclusion  $\D \subseteq \B$ induces the
(unital) inclusion
$\Mul(\D) \subseteq \Mul(\B)$ and $\Mul(\D)/\D \subseteq
\Mul(\B)/\B$.
Moreover, $x = \pi(X)$ and $a_l = \pi(A_l)$, for $l \geq 1$, belong
to $\Mul(\D)/\D$.

Since $x \in \Alg_{\C(\B)}(\Lat_{\C(\B)}(\A)) \cap \C(\D)$,
$x \in \Alg_{\C(\D)}(\Lat_{\C(\D)}(\A))$ and by
Theorem \ref{thm:main1},
$x \in \A$.
\end{proof}

Farah asked whether $(A' \cap \D^{**})' \cap \D = \A$ for every unital
simple purely infinite separable C*-algebra $\D$ and $\A$
any unital
sub-C*-algebra of $\D$ (\cite{FarahBicommutant}, Question 5.2).
We note that, by arguments similar to those for Proposition
\ref{prop:SeparableSubalgebra} and Theorem \ref{thm:Reflexive},
this question is equivalent to the same question but without the
requirement that $\D$ be separable.
We have actually answer Farah's
question in the affirmative for the case where $\D$ is a nonseparable
simple purely infinite
C*-algebra of the form $\Mul(\B)/\B$ where $\B$ is $\sigma$-unital and
stable.
Here is the short argument:  If $x \in (\A' \cap (\Mul(\B)/\B)^{**})'
\cap \Mul(\B)/\B$, then by \cite{FarahBicommutant}, Proposition 5.3,
$x \in (\A' \cap (\Mul(\B)/\B))'
\cap \Mul(\B)/\B$, and so by
Corollary \ref{cor:main3}, $x \in \A$.

\end{document}